\newcommand{\GAP}     {{\sf GAP}}
\newcommand{\catSimpGrp}    {{\bf SimpGrp}}
\newcommand{\catXMod}   {{\bf XMod}}
\newcommand{\catLie}   {{\bf Lie}}
\newcommand{\catLeibniz}   {{\bf Leibniz}}
\newcommand{\catGrLie}   {{\bf Gr_{Lie}}}
\newcommand{\catGrwA}   {{\bf Gr^{\bullet }}}
\newcommand{\catXModwA}   {{\bf XMod^{\bullet }}}
\newcommand{\catSimpGrpwA}   {{\bf SimpGrp^{\bullet }}}
\newcommand{\calX}{\mathcal{X}}
\let\pdfoutput=\undefined\fi
\chardef\@x10\chardef\@xv60
\def\tcitime{
\def\@time{%
  \@minute\time\@hour\@minute\divide\@hour\@xv
  \ifnum\@hour<\@x 0\fi\the\@hour:%
  \multiply\@hour\@xv\advance\@minute-\@hour
  \ifnum\@minute<\@x 0\fi\the\@minute
  }}%
\def\x@hyperref#1#2#3{%
   \catcode`\~ = 12
   \catcode`\$ = 12
   \catcode`\_ = 12
   \catcode`\# = 12
   \catcode`\& = 12
   \catcode`\% = 12
   \y@hyperref{#1}{#2}{#3}%
}
\def\y@hyperref#1#2#3#4{%
   #2\ref{#4}#3
   \catcode`\~ = 13
   \catcode`\$ = 3
   \catcode`\_ = 8
   \catcode`\# = 6
   \catcode`\& = 4
   \catcode`\% = 14
}
\def\QCTOpt[#1]#2{%
  \def\QCTOptB{#1}
  \def\QCTOptA{#2}
}
\def\QCTNOpt#1{%
  \def\QCTOptA{#1}
  \let\QCTOptB\empty
}
\def\Qct{%
  \@ifnextchar[{%
    \QCTOpt}{\QCTNOpt}
}
\def\QCBOpt[#1]#2{%
  \def\QCBOptB{#1}%
  \def\QCBOptA{#2}%
}
\def\QCBNOpt#1{%
  \def\QCBOptA{#1}%
  \let\QCBOptB\empty
}
\def\Qcb{%
  \@ifnextchar[{%
    \QCBOpt}{\QCBNOpt}%
}
\def\PrepCapArgs{%
  \ifx\QCBOptA\empty
    \ifx\QCTOptA\empty
      {}%
    \else
      \ifx\QCTOptB\empty
        {\QCTOptA}%
      \else
        [\QCTOptB]{\QCTOptA}%
      \fi
    \fi
  \else
    \ifx\QCBOptA\empty
      {}%
    \else
      \ifx\QCBOptB\empty
        {\QCBOptA}%
      \else
        [\QCBOptB]{\QCBOptA}%
      \fi
    \fi
  \fi
}
\def\GRAPHICSPS#1{%
 \ifcase\GRAPHICSTYPE
   \special{ps: #1}%
 \or
   \special{language "PS", include "#1"}%
 \fi
}%
\def\graffile#1#2#3#4{%
    \bgroup
	   \@inlabelfalse
       \leavevmode
       \@ifundefined{bbl@deactivate}{\def~{\string~}}{\activesoff}%
        \raise -#4 \BOXTHEFRAME{%
           \hbox to #2{\raise #3\hbox to #2{\null #1\hfil}}}%
    \egroup
}%
\def\draftbox#1#2#3#4{%
 \leavevmode\raise -#4 \hbox{%
  \frame{\rlap{\protect\tiny #1}\hbox to #2%
   {\vrule height#3 width\z@ depth\z@\hfil}%
  }%
 }%
}%
\let\nographics=\@msidraft
\newif\ifwasdraft
\def\GRAPHIC#1#2#3#4#5{%
   \ifnum\@msidraft=\@ne\draftbox{#2}{#3}{#4}{#5}%
   \else\graffile{#1}{#3}{#4}{#5}%
   \fi
}
\def\addtoLaTeXparams#1{%
    \edef\LaTeXparams{\LaTeXparams #1}}%
\newif\ifBoxFrame \BoxFramefalse
\newif\ifOverFrame \OverFramefalse
\newif\ifUnderFrame \UnderFramefalse
\def\BOXTHEFRAME#1{%
   \hbox{%
      \ifBoxFrame
         \frame{#1}%
      \else
         {#1}%
      \fi
   }%
}
\def\doFRAMEparams#1{\BoxFramefalse\OverFramefalse\UnderFramefalse\readFRAMEparams#1\end}%
\def\readFRAMEparams#1{%
 \ifx#1\end%
  \let\next=\relax
  \else
  \ifx#1i\dispkind=\z@\fi
  \ifx#1d\dispkind=\@ne\fi
  \ifx#1f\dispkind=\tw@\fi
  \ifx#1t\addtoLaTeXparams{t}\fi
  \ifx#1b\addtoLaTeXparams{b}\fi
  \ifx#1p\addtoLaTeXparams{p}\fi
  \ifx#1h\addtoLaTeXparams{h}\fi
  \ifx#1X\BoxFrametrue\fi
  \ifx#1O\OverFrametrue\fi
  \ifx#1U\UnderFrametrue\fi
  \ifx#1w
    \ifnum\@msidraft=1\wasdrafttrue\else\wasdraftfalse\fi
    \@msidraft=\@ne
  \fi
  \let\next=\readFRAMEparams
  \fi
 \next
 }%
\def\IFRAME#1#2#3#4#5#6{%
      \bgroup
      \let\QCTOptA\empty
      \let\QCTOptB\empty
      \let\QCBOptA\empty
      \let\QCBOptB\empty
      #6%
      \parindent=0pt
      \leftskip=0pt
      \rightskip=0pt
      \setbox0=\hbox{\QCBOptA}%
      \@tempdima=#1\relax
      \ifOverFrame
          \typeout{This is not implemented yet}%
          \show\HELP
      \else
         \ifdim\wd0>\@tempdima
            \advance\@tempdima by \@tempdima
            \ifdim\wd0 >\@tempdima
               \setbox1 =\vbox{%
                  \unskip\hbox to \@tempdima{\hfill\GRAPHIC{#5}{#4}{#1}{#2}{#3}\hfill}%
                  \unskip\hbox to \@tempdima{\parbox[b]{\@tempdima}{\QCBOptA}}%
               }%
               \wd1=\@tempdima
            \else
               \textwidth=\wd0
               \setbox1 =\vbox{%
                 \noindent\hbox to \wd0{\hfill\GRAPHIC{#5}{#4}{#1}{#2}{#3}\hfill}\\%
                 \noindent\hbox{\QCBOptA}%
               }%
               \wd1=\wd0
            \fi
         \else
            \ifdim\wd0>0pt
              \hsize=\@tempdima
              \setbox1=\vbox{%
                \unskip\GRAPHIC{#5}{#4}{#1}{#2}{0pt}%
                \break
                \unskip\hbox to \@tempdima{\hfill \QCBOptA\hfill}%
              }%
              \wd1=\@tempdima
           \else
              \hsize=\@tempdima
              \setbox1=\vbox{%
                \unskip\GRAPHIC{#5}{#4}{#1}{#2}{0pt}%
              }%
              \wd1=\@tempdima
           \fi
         \fi
         \@tempdimb=\ht1
         \advance\@tempdimb by -#2
         \advance\@tempdimb by #3
         \leavevmode
         \raise -\@tempdimb \hbox{\box1}%
      \fi
      \egroup%
}%
\def\DFRAME#1#2#3#4#5{%
  \vspace\topsep
  \hfil\break
  \bgroup
     \leftskip\@flushglue
	 \rightskip\@flushglue
	 \parindent\z@
	 \parfillskip\z@skip
     \let\QCTOptA\empty
     \let\QCTOptB\empty
     \let\QCBOptA\empty
     \let\QCBOptB\empty
	 \vbox\bgroup
        \ifOverFrame 
           #5\QCTOptA\par
        \fi
        \GRAPHIC{#4}{#3}{#1}{#2}{\z@}%
        \ifUnderFrame 
           \break#5\QCBOptA
        \fi
	 \egroup
  \egroup
  \vspace\topsep
  \break
}%
\def\FFRAME#1#2#3#4#5#6#7{%
  \@ifundefined{floatstyle}
    {
     \begin{figure}[#1]%
    }
    {
	 \ifx#1h
      \begin{figure}[H]%
	 \else
      \begin{figure}[#1]%
	 \fi
	}
  \let\QCTOptA\empty
  \let\QCTOptB\empty
  \let\QCBOptA\empty
  \let\QCBOptB\empty
  \ifOverFrame
    #4
    \ifx\QCTOptA\empty
    \else
      \ifx\QCTOptB\empty
        \caption{\QCTOptA}%
      \else
        \caption[\QCTOptB]{\QCTOptA}%
      \fi
    \fi
    \ifUnderFrame\else
      \label{#5}%
    \fi
  \else
    \UnderFrametrue%
  \fi
  \begin{center}\GRAPHIC{#7}{#6}{#2}{#3}{\z@}\end{center}%
  \ifUnderFrame
    #4
    \ifx\QCBOptA\empty
      \caption{}%
    \else
      \ifx\QCBOptB\empty
        \caption{\QCBOptA}%
      \else
        \caption[\QCBOptB]{\QCBOptA}%
      \fi
    \fi
    \label{#5}%
  \fi
  \end{figure}%
 }%
\def\makeactives{
  \catcode`\"=\active
  \catcode`\;=\active
  \catcode`\:=\active
  \catcode`\'=\active
  \catcode`\~=\active
}
   \gdef\activesoff{%
      \def"{\string"}%
      \def;{\string;}%
      \def:{\string:}%
      \def'{\string'}%
      \def~{\string~}%
    }
\def\FRAME#1#2#3#4#5#6#7#8{%
 \bgroup
 \ifnum\@msidraft=\@ne
   \wasdrafttrue
 \else
   \wasdraftfalse%
 \fi
 \def\LaTeXparams{}%
 \dispkind=\z@
 \def\LaTeXparams{}%
 \doFRAMEparams{#1}%
 \ifnum\dispkind=\z@\IFRAME{#2}{#3}{#4}{#7}{#8}{#5}\else
  \ifnum\dispkind=\@ne\DFRAME{#2}{#3}{#7}{#8}{#5}\else
   \ifnum\dispkind=\tw@
    \edef\@tempa{\noexpand\FFRAME{\LaTeXparams}}%
    \@tempa{#2}{#3}{#5}{#6}{#7}{#8}%
    \fi
   \fi
  \fi
  \ifwasdraft\@msidraft=1\else\@msidraft=0\fi{}%
  \egroup
 }%
\def\TEXUX#1{"texux"}
\long\def\QQQ#1#2{%
     \long\expandafter\def\csname#1\endcsname{#2}}%
\long\def\QQA#1#2{}%
\def\QTR#1#2{{\csname#1\endcsname {#2}}}%
\def\EXPAND#1[#2]#3{}%
\def\NOEXPAND#1[#2]#3{}%
\def\LaTeXparent#1{}%
\def\ChildStyles#1{}%
\def\ChildDefaults#1{}%
\def\QTagDef#1#2#3{}%
  \providecommand{\UNICODE}[2][]{\protect\rule{.1in}{.1in}}
  \providecommand{\U}[1]{\protect\rule{.1in}{.1in}}
\def\QQfnmark#1{\footnotemark}
 \def\abstract{%
  \if@twocolumn
   \section*{Abstract (Not appropriate in this style!)}%
   \else \small 
   \begin{center}{\bf Abstract\vspace{-.5em}\vspace{\z@}}\end{center}%
   \quotation 
   \fi
  }%
   \def\registered{\relax\ifmmode{}\r@gistered
                    \else$\m@th\r@gistered$\fi}%
 \def\r@gistered{^{\ooalign
  {\hfil\raise.07ex\hbox{$\scriptstyle\rm\text{R}$}\hfil\crcr
  \mathhexbox20D}}}}{}%
\newdimen\theight
\def\newfmtname{LaTeX2e}
  \DeclareOldFontCommand{\rm}{\normalfont\rmfamily}{\mathrm}
  \DeclareOldFontCommand{\sf}{\normalfont\sffamily}{\mathsf}
  \DeclareOldFontCommand{\tt}{\normalfont\ttfamily}{\mathtt}
  \DeclareOldFontCommand{\bf}{\normalfont\bfseries}{\mathbf}
  \DeclareOldFontCommand{\it}{\normalfont\itshape}{\mathit}
  \DeclareOldFontCommand{\sl}{\normalfont\slshape}{\@nomath\sl}
  \DeclareOldFontCommand{\sc}{\normalfont\scshape}{\@nomath\sc}
\def\alpha{{\Greekmath 010B}}%
\def\beta{{\Greekmath 010C}}%
\def\gamma{{\Greekmath 010D}}%
\def\delta{{\Greekmath 010E}}%
\def\epsilon{{\Greekmath 010F}}%
\def\zeta{{\Greekmath 0110}}%
\def\eta{{\Greekmath 0111}}%
\def\theta{{\Greekmath 0112}}%
\def\iota{{\Greekmath 0113}}%
\def\kappa{{\Greekmath 0114}}%
\def\lambda{{\Greekmath 0115}}%
\def\mu{{\Greekmath 0116}}%
\def\nu{{\Greekmath 0117}}%
\def\xi{{\Greekmath 0118}}%
\def\pi{{\Greekmath 0119}}%
\def\rho{{\Greekmath 011A}}%
\def\sigma{{\Greekmath 011B}}%
\def\tau{{\Greekmath 011C}}%
\def\upsilon{{\Greekmath 011D}}%
\def\phi{{\Greekmath 011E}}%
\def\chi{{\Greekmath 011F}}%
\def\psi{{\Greekmath 0120}}%
\def\omega{{\Greekmath 0121}}%
\def\varepsilon{{\Greekmath 0122}}%
\def\vartheta{{\Greekmath 0123}}%
\def\varpi{{\Greekmath 0124}}%
\def\varrho{{\Greekmath 0125}}%
\def\varsigma{{\Greekmath 0126}}%
\def\varphi{{\Greekmath 0127}}%
\def\nabla{{\Greekmath 0272}}
\def\FindBoldGroup{%
   {\setbox0=\hbox{$\mathbf{x\global\edef\theboldgroup{\the\mathgroup}}$}}%
}
\def\Greekmath#1#2#3#4{%
    \if@compatibility
        \ifnum\mathgroup=\symbold
           \mathchoice{\mbox{\boldmath$\displaystyle\mathchar"#1#2#3#4$}}%
                      {\mbox{\boldmath$\textstyle\mathchar"#1#2#3#4$}}%
                      {\mbox{\boldmath$\scriptstyle\mathchar"#1#2#3#4$}}%
                      {\mbox{\boldmath$\scriptscriptstyle\mathchar"#1#2#3#4$}}%
        \else
           \mathchar"#1#2#3#4%
        \fi 
    \else 
        \FindBoldGroup
        \ifnum\mathgroup=\theboldgroup 
           \mathchoice{\mbox{\boldmath$\displaystyle\mathchar"#1#2#3#4$}}%
                      {\mbox{\boldmath$\textstyle\mathchar"#1#2#3#4$}}%
                      {\mbox{\boldmath$\scriptstyle\mathchar"#1#2#3#4$}}%
                      {\mbox{\boldmath$\scriptscriptstyle\mathchar"#1#2#3#4$}}%
        \else
           \mathchar"#1#2#3#4%
        \fi     	    
	  \fi}
\newif\ifGreekBold  \GreekBoldfalse
\let\SAVEPBF=\pbf
\def\pbf{\GreekBoldtrue\SAVEPBF}%
  \newcounter{equationnumber}  
  \def\mathletters{%
     \addtocounter{equation}{1}
     \edef\@currentlabel{\theequation}%
     \setcounter{equationnumber}{\c@equation}
     \setcounter{equation}{0}%
     \edef\theequation{\@currentlabel\noexpand\alph{equation}}%
  }
    \def\BibTeX{{\rm B\kern-.05em{\sc i\kern-.025em b}\kern-.08em
                 T\kern-.1667em\lower.7ex\hbox{E}\kern-.125emX}}}{}%
\def\AmS{{\protect\usefont{OMS}{cmsy}{m}{n}%
                A\kern-.1667em\lower.5ex\hbox{M}\kern-.125emS}}}{}%
\def\@@eqncr{\let\@tempa\relax
    \ifcase\@eqcnt \def\@tempa{& & &}\or \def\@tempa{& &}%
      \else \def\@tempa{&}\fi
     \@tempa
     \if@eqnsw
        \iftag@
           \@taggnum
        \else
           \@eqnnum\stepcounter{equation}%
        \fi
     \fi
     \global\tag@false
     \global\@eqnswtrue
     \global\@eqcnt\z@\cr}
\def\TCItag{\@ifnextchar*{\@TCItagstar}{\@TCItag}}
\def\@TCItag#1{%
    \global\tag@true
    \global\def\@taggnum{(#1)}%
    \global\def\@currentlabel{#1}}
\def\@TCItagstar*#1{%
    \global\tag@true
    \global\def\@taggnum{#1}%
    \global\def\@currentlabel{#1}}
\def\tint{\msi@int\textstyle\int}%
\def\tiint{\msi@int\textstyle\iint}%
\def\tiiint{\msi@int\textstyle\iiint}%
\def\tiiiint{\msi@int\textstyle\iiiint}%
\def\tidotsint{\msi@int\textstyle\idotsint}%
\def\toint{\msi@int\textstyle\oint}%
\newtoks\temptoksa
\newtoks\temptoksb
\newtoks\temptoksc
\def\msi@int#1#2{%
 \def\@temp{{#1#2\the\temptoksc_{\the\temptoksa}^{\the\temptoksb}}}%
 \futurelet\@nextcs
 \@int
}
\def\@int{%
   \ifx\@nextcs\limits
      \typeout{Found limits}%
      \temptoksc={\limits}%
	  \let\@next\@intgobble%
   \else\ifx\@nextcs\nolimits
      \typeout{Found nolimits}%
      \temptoksc={\nolimits}%
	  \let\@next\@intgobble%
   \else
      \typeout{Did not find limits or no limits}%
      \temptoksc={}%
      \let\@next\msi@limits%
   \fi\fi
   \@next   
}%
\def\@intgobble#1{%
   \typeout{arg is #1}%
   \msi@limits
}
\def\msi@limits{%
   \temptoksa={}%
   \temptoksb={}%
   \@ifnextchar_{\@limitsa}{\@limitsb}%
}
\def\@limitsa_#1{%
   \temptoksa={#1}%
   \@ifnextchar^{\@limitsc}{\@temp}%
}
\def\@limitsb{%
   \@ifnextchar^{\@limitsc}{\@temp}%
}
\def\@limitsc^#1{%
   \temptoksb={#1}%
   \@ifnextchar_{\@limitsd}{\@temp}%
}
\def\@limitsd_#1{%
   \temptoksa={#1}%
   \@temp
}
\def\dint{\msi@int\displaystyle\int}%
\def\diint{\msi@int\displaystyle\iint}%
\def\diiint{\msi@int\displaystyle\iiint}%
\def\diiiint{\msi@int\displaystyle\iiiint}%
\def\didotsint{\msi@int\displaystyle\idotsint}%
\def\doint{\msi@int\displaystyle\oint}%
\def\ExitTCILatex{\makeatother }
\if@compatibility\message{amsmath already loaded}\fi\aftergroup\ExitTCILatex}
\if@compatibility\message{amstex already loaded}\fi\aftergroup\ExitTCILatex}
\if@compatibility\message{amsgen already loaded}\fi\aftergroup\ExitTCILatex}
\let\DOTSI\relax
\def\RIfM@{\relax\ifmmode}%
\def\FN@{\futurelet\next}%
\def\iint{\DOTSI\intno@\tw@\FN@\ints@}%
\def\iiint{\DOTSI\intno@\thr@@\FN@\ints@}%
\def\iiiint{\DOTSI\intno@4 \FN@\ints@}%
\def\idotsint{\DOTSI\intno@\z@\FN@\ints@}%
\def\ints@{\findlimits@\ints@@}%
\newif\iflimtoken@
\newif\iflimits@
\def\findlimits@{\limtoken@true\ifx\next\limits\limits@true
 \else\ifx\next\nolimits\limits@false\else
 \limtoken@false\ifx\ilimits@\nolimits\limits@false\else
 \ifinner\limits@false\else\limits@true\fi\fi\fi\fi}%
\def\multint@{\int\ifnum\intno@=\z@\intdots@                          
 \else\intkern@\fi                                                    
 \ifnum\intno@>\tw@\int\intkern@\fi                                   
 \ifnum\intno@>\thr@@\int\intkern@\fi                                 
 \int}
\def\multintlimits@{\intop\ifnum\intno@=\z@\intdots@\else\intkern@\fi
 \ifnum\intno@>\tw@\intop\intkern@\fi
 \ifnum\intno@>\thr@@\intop\intkern@\fi\intop}%
\def\intic@{%
    \mathchoice{\hskip.5em}{\hskip.4em}{\hskip.4em}{\hskip.4em}}%
\def\negintic@{\mathchoice
 {\hskip-.5em}{\hskip-.4em}{\hskip-.4em}{\hskip-.4em}}%
\def\ints@@{\iflimtoken@                                              
 \def\ints@@@{\iflimits@\negintic@
   \mathop{\intic@\multintlimits@}\limits                             
  \else\multint@\nolimits\fi                                          
  \eat@}
 \else                                                                
 \def\ints@@@{\iflimits@\negintic@
  \mathop{\intic@\multintlimits@}\limits\else
  \multint@\nolimits\fi}\fi\ints@@@}%
\def\intkern@{\mathchoice{\!\!\!}{\!\!}{\!\!}{\!\!}}%
\def\plaincdots@{\mathinner{\cdotp\cdotp\cdotp}}%
\def\intdots@{\mathchoice{\plaincdots@}%
 {{\cdotp}\mkern1.5mu{\cdotp}\mkern1.5mu{\cdotp}}%
 {{\cdotp}\mkern1mu{\cdotp}\mkern1mu{\cdotp}}%
 {{\cdotp}\mkern1mu{\cdotp}\mkern1mu{\cdotp}}}%
\def\RIfM@{\relax\protect\ifmmode}
\def\text{\RIfM@\expandafter\text@\else\expandafter\mbox\fi}
\let\nfss@text\text
\def\text@#1{\mathchoice
   {\textdef@\displaystyle\f@size{#1}}%
   {\textdef@\textstyle\tf@size{\firstchoice@false #1}}%
   {\textdef@\textstyle\sf@size{\firstchoice@false #1}}%
   {\textdef@\textstyle \ssf@size{\firstchoice@false #1}}%
   \glb@settings}
\def\textdef@#1#2#3{\hbox{{%
                    \everymath{#1}%
                    \let\f@size#2\selectfont
                    #3}}}
\newif\iffirstchoice@
\def\Let@{\relax\iffalse{\fi\let\\=\cr\iffalse}\fi}%
\def\vspace@{\def\vspace##1{\crcr\noalign{\vskip##1\relax}}}%
\def\multilimits@{\bgroup\vspace@\Let@
 \baselineskip\fontdimen10 \scriptfont\tw@
 \advance\baselineskip\fontdimen12 \scriptfont\tw@
 \lineskip\thr@@\fontdimen8 \scriptfont\thr@@
 \lineskiplimit\lineskip
 \vbox\bgroup\ialign\bgroup\hfil$\m@th\scriptstyle{##}$\hfil\crcr}%
\def\Sb{_\multilimits@}%
\def\endSb{\crcr\egroup\egroup\egroup}%
\def\Sp{^\multilimits@}%
\newdimen\ex@
\def\rightarrowfill@#1{$#1\m@th\mathord-\mkern-6mu\cleaders
 \hbox{$#1\mkern-2mu\mathord-\mkern-2mu$}\hfill
 \mkern-6mu\mathord\rightarrow$}%
\def\leftarrowfill@#1{$#1\m@th\mathord\leftarrow\mkern-6mu\cleaders
 \hbox{$#1\mkern-2mu\mathord-\mkern-2mu$}\hfill\mkern-6mu\mathord-$}%
\def\leftrightarrowfill@#1{$#1\m@th\mathord\leftarrow
\mkern-6mu\cleaders
 \hbox{$#1\mkern-2mu\mathord-\mkern-2mu$}\hfill
 \mkern-6mu\mathord\rightarrow$}%
\def\overrightarrow{\mathpalette\overrightarrow@}%
\def\overrightarrow@#1#2{\vbox{\ialign{##\crcr\rightarrowfill@#1\crcr
 \noalign{\kern-\ex@\nointerlineskip}$\m@th\hfil#1#2\hfil$\crcr}}}%
\def\overleftarrow{\mathpalette\overleftarrow@}%
\def\overleftarrow@#1#2{\vbox{\ialign{##\crcr\leftarrowfill@#1\crcr
 \noalign{\kern-\ex@\nointerlineskip}$\m@th\hfil#1#2\hfil$\crcr}}}%
\def\overleftrightarrow{\mathpalette\overleftrightarrow@}%
\def\overleftrightarrow@#1#2{\vbox{\ialign{##\crcr
   \leftrightarrowfill@#1\crcr
 \noalign{\kern-\ex@\nointerlineskip}$\m@th\hfil#1#2\hfil$\crcr}}}%
\def\underrightarrow{\mathpalette\underrightarrow@}%
\def\underrightarrow@#1#2{\vtop{\ialign{##\crcr$\m@th\hfil#1#2\hfil
  $\crcr\noalign{\nointerlineskip}\rightarrowfill@#1\crcr}}}%
\def\underleftarrow{\mathpalette\underleftarrow@}%
\def\underleftarrow@#1#2{\vtop{\ialign{##\crcr$\m@th\hfil#1#2\hfil
  $\crcr\noalign{\nointerlineskip}\leftarrowfill@#1\crcr}}}%
\def\underleftrightarrow{\mathpalette\underleftrightarrow@}%
\def\underleftrightarrow@#1#2{\vtop{\ialign{##\crcr$\m@th
  \hfil#1#2\hfil$\crcr
 \noalign{\nointerlineskip}\leftrightarrowfill@#1\crcr}}}%
\def\qopnamewl@#1{\mathop{\operator@font#1}\nlimits@}
\let\nlimits@\displaylimits
\def\setboxz@h{\setbox\z@\hbox}
\def\varlim@#1#2{\mathop{\vtop{\ialign{##\crcr
 \hfil$#1\m@th\operator@font lim$\hfil\crcr
 \noalign{\nointerlineskip}#2#1\crcr
 \noalign{\nointerlineskip\kern-\ex@}\crcr}}}}
 \def\rightarrowfill@#1{\m@th\setboxz@h{$#1-$}\ht\z@\z@
  $#1\copy\z@\mkern-6mu\cleaders
  \hbox{$#1\mkern-2mu\box\z@\mkern-2mu$}\hfill
  \mkern-6mu\mathord\rightarrow$}
\def\leftarrowfill@#1{\m@th\setboxz@h{$#1-$}\ht\z@\z@
  $#1\mathord\leftarrow\mkern-6mu\cleaders
  \hbox{$#1\mkern-2mu\copy\z@\mkern-2mu$}\hfill
  \mkern-6mu\box\z@$}
\def\projlim{\qopnamewl@{proj\,lim}}
\def\injlim{\qopnamewl@{inj\,lim}}
\def\varinjlim{\mathpalette\varlim@\rightarrowfill@}
\def\varprojlim{\mathpalette\varlim@\leftarrowfill@}
\def\varliminf{\mathpalette\varliminf@{}}
\def\varliminf@#1{\mathop{\underline{\vrule\@depth.2\ex@\@width\z@
   \hbox{$#1\m@th\operator@font lim$}}}}
\def\varlimsup{\mathpalette\varlimsup@{}}
\def\varlimsup@#1{\mathop{\overline
  {\hbox{$#1\m@th\operator@font lim$}}}}
\def\align{\@verbatim \frenchspacing\@vobeyspaces \@alignverbatim
You are using the "align" environment in a style in which it is not defined.}
\let\csname endalign*\endcsname =\endtrivlist
\def\alignat{\@verbatim \frenchspacing\@vobeyspaces \@alignatverbatim
You are using the "alignat" environment in a style in which it is not defined.}
\let\csname endalignat*\endcsname =\endtrivlist
\def\xalignat{\@verbatim \frenchspacing\@vobeyspaces \@xalignatverbatim
You are using the "xalignat" environment in a style in which it is not defined.}
\let\csname endxalignat*\endcsname =\endtrivlist
\def\gather{\@verbatim \frenchspacing\@vobeyspaces \@gatherverbatim
You are using the "gather" environment in a style in which it is not defined.}
\let\csname endgather*\endcsname =\endtrivlist
\def\multiline{\@verbatim \frenchspacing\@vobeyspaces \@multilineverbatim
You are using the "multiline" environment in a style in which it is not defined.}
\let\csname endmultiline*\endcsname =\endtrivlist
\def\arrax{\@verbatim \frenchspacing\@vobeyspaces \@arraxverbatim
You are using a type of "array" construct that is only allowed in AmS-LaTeX.}
\def\tabulax{\@verbatim \frenchspacing\@vobeyspaces \@tabulaxverbatim
You are using a type of "tabular" construct that is only allowed in AmS-LaTeX.}
\let\csname endarrax*\endcsname =\endtrivlist
\let\csname endtabulax*\endcsname =\endtrivlist
 \def\endequation{%
     \ifmmode\ifinner 
      \iftag@
        \addtocounter{equation}{-1} 
        $\hfil
           \displaywidth\linewidth\@taggnum\egroup \endtrivlist
        \global\tag@false
        \global\@ignoretrue   
      \else
        $\hfil
           \displaywidth\linewidth\@eqnnum\egroup \endtrivlist
        \global\tag@false
        \global\@ignoretrue 
      \fi
     \else   
      \iftag@
        \addtocounter{equation}{-1} 
        \eqno \hbox{\@taggnum}
        \global\tag@false%
        $$\global\@ignoretrue
      \else
        \eqno \hbox{\@eqnnum}
        $$\global\@ignoretrue
      \fi
     \fi\fi
 } 
 \newif\iftag@ \tag@false
 \def\TCItag{\@ifnextchar*{\@TCItagstar}{\@TCItag}}
 \def\@TCItag#1{%
     \global\tag@true
     \global\def\@taggnum{(#1)}%
     \global\def\@currentlabel{#1}}
 \def\@TCItagstar*#1{%
     \global\tag@true
     \global\def\@taggnum{#1}%
     \global\def\@currentlabel{#1}}
     \def\tag{\@ifnextchar*{\@tagstar}{\@tag}}
     \def\@tag#1{%
         \global\tag@true
         \global\def\@taggnum{(#1)}}
     \def\@tagstar*#1{%
         \global\tag@true
         \global\def\@taggnum{#1}}
\newcommand{\cmark}{\ding{51}}%
\newcommand{\xmark}{\ding{55}}%
\theoremstyle{plain}
\theoremstyle{definition}
\begin{document}
\title{2-Dimensional Groups with Action : The Category of Crossed Module of Groups
with Action}

\author[a]{A. Odaba\c{s}}
\author[a]{E. Soylu Yılmaz}
\affil[a]{\small{Department of Mathematics and Computer Science, Osmangazi University, Eskisehir, Turkey}}

\date{}

\maketitle

\begin{abstract}
	
 In this paper, we define the notion of crossed modules of groups with action and investigate related structures. Functions for computing of these structures have been written using the \verb|GAP| computational discrete algebra programming language.
	
\end{abstract}

\noindent{\bf Key Words:} Group with action; category; Gap computational discrete algebra programming language.
\\ {\bf Classification:} 18-08, 18G50.


\section{Introduction}

Crossed modules can be considered as two-dimensional of algebraic structures
were first mentioned by Whitehead in 1941 \cite{WHT1}. Later Whitehead named
crossed modules in \cite{WHT2} as an additional note of his early work.
These concepts aroused in the paper entitled 'Combinatorial homotopy II' 
\cite{WHT} paper which also introduced the substantial concept of a free
crossed module. Many generalizations of crossed module are given in the
paper of Janelidze \cite{JL}.

The term 2-group (two-dimensional group) can be considered as a cluster of
well-matched categories of crossed modules and also the cat-1 groups which
are the pair of groups. Both crossed modules and cat-1 groups can be viewed
as a Moore complex of a simplicial group.

The notion of the group with action first appeared in Datuashvili's paper 
\cite{DT}. Datuashvili demonstrated this structure by one of the three
problems of Loday mentioned in \cite{LO,LOD}. The problem is associated with
the Leibniz algebras to define the algebraic correspondence called
`coquecigrue' object as Lie group structure. Leibniz algebras are closely
related with the lower central series of a group constructed and examined in 
\cite{WITT} by Witt.

Leibniz algebras was firstly identified with the research of Loday in 1989 
\cite{LD}. A Leibniz algebra is a non-associative equivalent of a Lie
algebra. To explain this association, the functor 
\[
F: \catGrLie \longrightarrow \catLie 
\]%
was constructed by Witt \cite{WITT} in 1937 where $\catGrLie$ is the
category of Lie groups and $\catLie$ is the category of Lie algebras. A
group with action arises from the analogous version of the above functor
taking the Leibniz algebras instead of Lie algebras. Since Leibniz algebras
are non-associative congruence of Lie algebras, a group with action takes
place of Lie groups. Therefore, Datuashvili defined the functor 
\[
\catGrwA \longrightarrow \catLeibniz 
\]%
from the category, $\catGrwA$, of group with action to the category, $%
\catLeibniz$, of Leibniz algebras (see for details \cite{DT,DT2}).

Likewise obtaining a group version of Lie algebras, the category of
Leibniz algebras are equivalent to the category of group with action.

A shared package \textsf{XMod} \cite{xmod}, for the \verb|GAP| \cite{gap}, computational discrete algebra system was described by Wensley et al. which contains functions for computing crossed modules of groups and cat$^{1}$-groups and their morphisms. Thereafter, the algebraic version of a \verb|GAP| package XModAlg \cite{xmodalg} was given by Arvasi and Odabas (see \cite{arvasi-odabas}). In
this paper, we describe a package \textsf{XModGwA} for \verb|GAP| which constructs crossed modules of groups with action (see \cite{xmodgwa}).

In this paper, we investigate the simplicial group with action corresponding
to a crossed module of group with action. We also give a natural equivalence
for these structures.

Shortly, we can summarize the purpose of this paper as:
\begin{itemize}
	\item To construct the two - dimensional group with action as crossed
	module of group with action,
	
	\item To determine the action conditions between two group with
	actions,
	
	\item To compose a group with action crossed module by an action,
	
	\item To give the categorical equivalences between other structures.
\end{itemize}

\section{The Main Text}

The concept of crossed module, generalizing the notion of a G-module, was
introduced by Whitehead \cite{WHT} in the course of his studies on the
algebraic structure on the second relative homotopy group. We first recall
the crossed modules of groups from \cite{TIM,arvasi3}. A crossed module of
groups is a homomorphism $\partial :S\rightarrow R$ where $S$ and $R$ are
groups and there exist an action of $R$ on $S$ denoted by $(r,s)\mapsto $ $%
r\cdot s$. These data must satisfy the following two conditions,

\textbf{CM1)} $\partial $ is $R$-equivariant, so $\partial (r\cdot s)=r{%
	\partial (}s)r^{-1}$

\textbf{CM2)} Peiffer rule; ${\partial (}s)\cdot s_{1}=ss_{1}s^{-1}$

\noindent where $r\in R$ and $s,s_{1}\in S$.

A crossed module is written by $\calX=(\partial :S\rightarrow R)$ notation.
The groups $S,R$ and the group homomorphism $\partial $ are called the \emph{%
	source}, \emph{range} and \emph{boundary} of $\calX$ respectively. When only
the first of these conditions is satisfied, the resulting structure is a 
\emph{pre-crossed module}. Given a pre-crossed module $\calX=(\partial
:S\rightarrow R)$, one can form an internal directed graph in the category
of groups simply by forming the semidirect product $S \rtimes R$ and taking
the source and target to send an element $(s,r)$ to $r$ and $\partial (s)r$
respectively.

\example{}

\begin{itemize}
	
	\item Let $G$ be a group and $N$ be a normal subgroup of $G$. $\calX%
	=(inc:N\hookrightarrow G)$ is a crossed module. Where $G$ acts on $N$ by
	conjugation.
	
	\item Let $M$ be a $G$-module. $\calX$ is a crossed module with trivial
	morphism $0_{M}:M\rightarrow G$, $m\mapsto 1_{G}$.
	
	\item For a group $H$, $\alpha :H\rightarrow Aut(H)$ homomorphism represents
	the action so that $\calX=(\alpha :H\rightarrow Aut(H))$ is a crossed module.
	
	\item A central extension crossed module has boundary as surjection $%
	\partial : S \to R$ with the central kernel, where $r \in R$ acts on $S$ by
	conjugation with $\partial^{-1}r$.
	
	\item The direct product of $\calX=(\partial :S\rightarrow R)$ and $\calX%
	^{\prime }=(\partial ^{\prime }:S^{\prime }\rightarrow R^{\prime })$ is $%
	\calX\times \calX^{\prime }=(\partial \times \partial ^{\prime }:S\times
	S^{\prime }\rightarrow R\times R^{\prime })$ with $R,R^{\prime }$ acting
	trivially on $S,\ S^{\prime }$ respectively.
\end{itemize}

Let $\calX =(\partial :S\rightarrow R)$ and $\calX^{\prime }=(\partial
^{\prime }:S^{\prime }\rightarrow R^{\prime })$ be two crossed modules. The
diagram
$$
\xymatrix{
	& S \ar[d]_{\partial} \ar[r]^{\theta}
	& S^{\prime} \ar[d]^{\partial^{\prime}}  \\
	&R  \ar[r]_{\psi}
	&R^{\prime}            } 
$$

\noindent is commutative. That is for $r\in R$ and $s\in S$ 
\[
{\partial }^{\prime }(\theta (s))=\psi (\partial (s)) 
\]%
and $\theta $ preserves the action of $R$ on $S$.

\[
\theta (r\cdot s)=\psi (r)\cdot {\theta (s)} 
\]%
then $(\theta ,\psi )$ is called the morphism of crossed modules. This
states a category of crossed modules, \catXMod.

A simplicial group \textbf{G} consists of a family of groups $\{G_{n}\}$
together with face and degeneracy maps%
\begin{eqnarray*}
	d_{i} &=&d_{i}^{n}:G_{n}\rightarrow G_{n-1},\text{ \ \ }0\leq i\leq n,\text{ 
	}(n\neq 0) \\
	s_{i} &=&s_{i}^{n}:G_{n}\rightarrow G_{n+1},\text{ \ \ }0\leq i\leq n,\text{ 
	}(n\neq 0)
\end{eqnarray*}

satisfying the usual simplicial identities given in \cite{may,curtis}%
. The category of simplicial groups is denoted by \catSimpGrp.

The Moore complex $\mathbf{NG}$ of a simplicial group $\mathbf{G}$ is
defined to be the normal chain complex $(\mathbf{NG},\partial )$ with%
\[
NG_n= {\bigcap_{i=0}^{n-1} }\ker d_{i} 
\]%
and with $\partial _{n}:$ $NG_{n}\rightarrow $ $NG_{n-1}$ induced from $%
d_{n} $ by restriction. The n$^{th}$ \textit{homotopy group }$\pi _{n}$($%
\mathbf{G}$) of $\mathbf{G}$ is the n$^{th}$ homology of the Moore complex
of $\mathbf{G}$, i.e. 
\[
\pi _{n}(\mathbf{G})\cong H_{n}(\mathbf{NG},\partial )=\bigcap_{i=0}^{n}\ker
d_{i}^{n}/d_{n+1}^{n+1}(\bigcap_{i=0}^{n}\ker d_{i}^{n+1}). 
\]

We say that the Moore complex $\mathbf{NG}$ of a simplicial group $\mathbf{G}
$ is of \textit{length k} if $\mathbf{NG}_{n}=1$ for all $n\geq k+1$. We
denote the category of simplicial groups with Moore complex of length $k$ by 
$\mathbf{SimpGrp}_{\leq k}.$

\begin{theorem}\label{teo1} The category of crossed modules is equivalent to the category of simplicial groups with Moore complex of length 1. (see \cite{LO2})
\end{theorem}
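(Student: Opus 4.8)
The plan is to construct functors in both directions between $\catSimpGrp_{\leq 1}$ and $\catXMod$ and to show they are mutually quasi-inverse.

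\emph{From simplicial groups to crossed modules.} Given a simplicial group $\mathbf{G}$ with $NG_n = 1$ for $n \geq 2$, I would set $S = NG_1 = \ker d_0^1$, $R = G_0$, and $\partial = d_1^1|_{S}\colon S \to R$, with $R$ acting on $S$ by $r\cdot s = s_0(r)\,s\,s_0(r)^{-1}$; this lands in $S$ since $d_0 s_0 = \id$, so $d_0(r\cdot s) = r\cdot 1 \cdot r^{-1} = 1$. Condition \textbf{CM1} is then immediate: $\partial(r\cdot s) = d_1\bigl(s_0(r)\,s\,s_0(r)^{-1}\bigr) = d_1 s_0(r)\,\partial(s)\,d_1 s_0(r)^{-1} = r\,\partial(s)\,r^{-1}$ because $d_1 s_0 = \id$ and $d_1$ is a homomorphism. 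For the Peiffer identity \textbf{CM2}, I would work in $G_2$: for $s,s_1 \in S \subseteq G_1$ one forms a suitable element of $NG_2 = \ker d_0 \cap \ker d_1$ out of the degeneracies $s_0(s)$ and $s_1(s_1)$ (the Peiffer pairing), uses $NG_2 = 1$ to force it to be trivial, and reads off $\partial(s)\cdot s_1 = s s_1 s^{-1}$ by applying $d_2$. On morphisms, a simplicial map $f$ restricts to the pair $(\theta,\psi) = (f_1|_{NG_1}, f_0)$, which is at once seen to be a morphism of crossed modules.

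\emph{From crossed modules to simplicial groups.} Given $\calX = (\partial\colon S \to R)$, I would take the internal category $S \rtimes R \rightrightarrows R$ with source $(s,r)\mapsto r$, target $(s,r)\mapsto \partial(s)r$, and composition using the semidirect structure (the associated $\mathrm{cat}^1$-group), and let $\mathbf{G}$ be its nerve, so $G_0 = R$, $G_1 = S \rtimes R$, and $G_n$ is the group of length-$n$ composable strings, with the usual faces and degeneracies built from $\partial$ and the action. One then checks $NG_0 = R$, $NG_1 = S$ with induced boundary $\partial$, and $NG_n = 1$ for $n \geq 2$; the last point is exactly where well-definedness of composition --- hence \textbf{CM2} --- is consumed. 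Functoriality is routine.

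\emph{The equivalence.} It remains to produce natural isomorphisms between the two composites and the identity functors. Starting from a crossed module and going round is essentially immediate from the construction. The other direction --- starting from $\mathbf{G} \in \catSimpGrp_{\leq 1}$, passing to its crossed module and rebuilding --- is where the real content lies: here I would invoke the Conduch\'e-type semidirect-product decomposition of a simplicial group, by which each $G_n$ splits as an iterated semidirect product of copies of the $NG_k$ indexed by the surjections $[n]\twoheadrightarrow[k]$. When $NG_k = 1$ for $k \geq 2$, this shows that $G_n$, together with all higher faces and degeneracies, is rigidly determined by $NG_0$, $NG_1$, $\partial$, and the action, so $\mathbf{G}$ is isomorphic to the simplicial group reconstructed from its crossed module. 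I expect this rigidity argument --- showing the whole simplicial object is forced by its $1$-truncation --- to be the main obstacle; by comparison, the checks of \textbf{CM1}, \textbf{CM2}, and compatibility with morphisms are bookkeeping. (A clean account is in \cite{LO2}.)
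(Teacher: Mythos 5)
Your proposal is correct and follows essentially the same route as the paper: the paper states Theorem \ref{teo1} only with a citation to Loday, but its later proof of the groups-with-action analogue uses exactly your strategy --- $S = NG_1$, $R = NG_0$, the action by conjugation through $s_0$, the Peiffer element $[s_0a,s_1b][s_1b,s_1a]$ forced trivial because the Moore complex has length $1$, and the converse via the semidirect product $R\rtimes S$ regarded as a $1$-truncated simplicial group with $\partial_2(NG_2)=[\ker d_0,\ker d_1]=1$. Your explicit appeal to the Conduch\'e-type semidirect decomposition to obtain the natural isomorphism on the simplicial side is the one place where you supply detail that the paper leaves implicit.
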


\section{Group with Action}\label{sec-gwa}

In this section, we recall the definition of group with action given by
Datuashvili in \cite{DT}.

Let $G$ be a group. A map $\varepsilon :G\times G\rightarrow G$ represents
the right action on itself. For $g,g^{\prime },g^{\prime \prime }\in G$, 
\begin{eqnarray*}
	\varepsilon (g,g^{\prime }+g^{\prime \prime }) &=&\varepsilon (\varepsilon
	(g,g^{\prime }),g^{\prime \prime }) \\
	\varepsilon (g,0) &=&g \\
	\varepsilon (g^{\prime }+g^{\prime \prime },g) &=&\varepsilon (g^{\prime
	},g)+\varepsilon (g^{\prime \prime },g) \\
	\varepsilon (0,g) &=&0
\end{eqnarray*}%
\noindent with the above conditions, $G^{\bullet }$ is called the group with
action. The action is denoted by $\varepsilon (g,h)=g^{h},$ for $g,h\in G$.
Here the group operation is addition.

Let $(G,\varepsilon)$ and $(G^{\prime},\varepsilon^{\prime})$ be group with
actions. A morphism between group with actions is denoted with the following
diagram. 
\[
\xymatrixrowsep{0.7in}\xymatrixcolsep{0.7in}%
\xymatrix{ G \times G
	\ar[r]^{\varepsilon} \ar[d]_{(\varphi,\varphi)} & G \ar[d]^{\varphi} \\ G'
	\times G' \ar[r]^{\varepsilon'} & G' } 
\]
\noindent with $\phi:G \longrightarrow G^{\prime}$ map. Furthermore for $g,h
\in G$ 
\[
(G,\varepsilon_{G}) \longrightarrow (G^{\prime},\varepsilon_{G}^{\prime}) 
\]

\textbf{\ Examples}

\begin{itemize}
	\item Let $G$ be a group. Consider $G^{\bullet }$ as a group with action
	with the (right) action by conjugation.
	
	\item Consider the group klein four $Kl_{4}=\{e,a,b,ab\}$. We have ten
	groups with action obtained from $Kl_{4}$. Three of them are denoted by $%
	(Kl_{4},\varepsilon _{i})$, $i=1,2,3$. The tables of the actions $%
	\varepsilon _{i}$ are as follows: 
	\[
	\begin{tabular}{c||cccc}
		$\varepsilon _{1}$ & $e$ & $a$ & $b$ & $ab$ \\ \hline\hline
		$e$ & $e$ & $a$ & $b$ & $ab$ \\ 
		$a$ & $e$ & $a$ & $b$ & $ab$ \\ 
		$b$ & $e$ & $a$ & $ab$ & $b$ \\ 
		$ab$ & $e$ & $a$ & $ab$ & $b$%
	\end{tabular}%
	\ \qquad 
	\begin{tabular}{c||cccc}
		$\varepsilon _{2}$ & $e$ & $a$ & $b$ & $ab$ \\ \hline\hline
		$e$ & $e$ & $a$ & $b$ & $ab$ \\ 
		$a$ & $e$ & $ab$ & $b$ & $a$ \\ 
		$b$ & $e$ & $a$ & $b$ & $ab$ \\ 
		$ab$ & $e$ & $ab$ & $b$ & $a$%
	\end{tabular}%
	\qquad 
	\begin{tabular}{c||cccc}
		$\varepsilon _{3}$ & $e$ & $a$ & $b$ & $ab$ \\ \hline\hline
		$e$ & $e$ & $a$ & $b$ & $ab$ \\ 
		$a$ & $e$ & $a$ & $b$ & $ab$ \\ 
		$b$ & $e$ & $a$ & $b$ & $ab$ \\ 
		$ab$ & $e$ & $a$ & $b$ & $ab$%
	\end{tabular}%
	\]%
	where the $ij$th element shows the right action of the $i$th element on the $%
	j$th element for $i,j\in \{1,2,3,4\}$
\end{itemize}
\begin{definition}
	\label{ideal} Let $G^{\bullet }$ be a group with action and $A$ be a
	nonempty subset of $G$. If the conditions
	
	\begin{enumerate}
		\item[i)] $A$ is a normal subgroup of $G$ as a group,
		
		\item[ii)] $a^{g}\in A$, for $a\in A$ and $g\in G,$
		
		\item[iii)] $-g+g^{a}\in A$, for $a\in A$ and $g\in G$,
	\end{enumerate}
	
	\noindent satisfied, then $A^{\bullet }$ is called an ideal of $G^{\bullet }$ \cite{DT}.
\end{definition}

\textbf{Condition 1:}
For each $x,y,z\in G$, 
\[
x-x^{(z^{x})}+x^{y+z^{x}}-x+x^{z}-x^{z+y^{z}}=0\text{.} 
\]

In \cite{DT}, category of Abelian groups with action satisfying this
condition and category of Lie-Leibniz algebras were defined. Then it was
proved that the analogue of Witt's construction defined a functor from the
category of groups with action to category of Lie-Leibniz algebras which
gave rise to Leibniz algebras (introduced in \cite{LO} ) over the ring of
integers.

\begin{example}
	Each group with the trivial action satisfies Condition 1.
\end{example}

\section{Crossed Modules of Groups with Action}\label{sec-gwaXMod}

In this section, we will define an action between two group with actions and
a new category $\catXModwA$ called the category of crossed module of groups
with action. Let $S^{\bullet }=(S,\varepsilon _{S})$ and $R^{\bullet
}=(R,\varepsilon _{R})$ be groups with action. We can use the exact sequence
to define the action. For $s\in S,r\in R$ the \noindent action $(r,s)\mapsto
r\cdot s$ of $R$ on $S$ can be represented with the following sequence

\[
\xymatrixrowsep{0.3in}\xymatrixcolsep{0.4in}%
\xymatrix{\ & 0\ar[r]&
	S\ar[r]^{i} &K\ar[r]^{j} & R\ar@<0ex>[r]\ar@/^/[l]<1ex>^{{t}} & 0 } 
\]

Existence of the function $t$ with $jt=1_{R}$ is the main property for the
described exact sequence and the equation $r\cdot s=-t(r)+i(s)+t(r)$ denotes
the $R$-action on $S$. For $s,s_{1}\in S$ and $r,r_{1}\in R$

\begin{enumerate}
	\item[i)] $(r+r_{1})\cdot s=r\cdot (r_1\cdot s)$
	
	\item[ii)] $r\cdot (s+s_{1})=r\cdot s+r\cdot s_{1}$
	
	\item[iii)] $0_{R}\cdot s=s,$ $r\cdot 0_{S}=0_{s}$
\end{enumerate}

In this category, there must be two derived actions of $R$ on $S$
corresponding to the group operations. The First one is defined above. We define
the second action as $r \ast s = s^{t(r)}$.

The exact sequences with left group actions on itself can be represented by
the following diagram. 
\[
\xymatrixrowsep{0.3in}\xymatrixcolsep{0.3in}%
\xymatrix{ S \ar@{->}[d]_{i_{S}} \ar[rr]^{i} && K \ar@{=}[dd]^{id}
	\ar[rr]^{j} && R \ar@{->}[d]^{i_{R}} \ar @/_1pc/[ll]_{t} \\ B
	\ar@{->}[d]_{j_{S}} && && D \ar@{->}[d]_{j_{R}} \\ S \ar @/_1pc/[u]_{t_{S}}
	\ar[rr]_{i} && K\ar[rr]_{j} && R \ar @/_1pc/[u]_{t_{R}} \ar @/_1pc/[ll]_{t} }
\]%
We obtained the following equalities via the diagram. \label{cond_action}

\begin{enumerate}
	\item[iv)] $(r + r_1) \ast s = r \ast (r_1 \ast s)$
	
	\item[v)] $r \ast (s+s_1)= r \ast s+r \ast s_1$
	
	\item[vi)] $0_{R}\ast s=s,$ $r\ast 0_{S}=0_{s}$
	
	\item[vii)] $r\ast (r_{1}\cdot s)=(r_{1}^{r})\cdot (r\ast s)$
	
	\item[viii)] $(r\ast s)^{(r\cdot s_{1})}=r\ast s^{s_{1}}$
\end{enumerate}

\begin{definition}
	Let $S^{\bullet }=(S,\varepsilon _{S})$ and $R^{\bullet }=(R,\varepsilon
	_{R})$ be groups with action. Denote $\varepsilon _{S}(s,s_1)={s_1}^s$ and $%
	\varepsilon _{R}(r,r_1)={r_1}^r$ for $s,s_1\in S$ and $r,r_1\in R$. Let
	
	\[
	\begin{array}{ccc}
		\cdot :R\times S & \longrightarrow & S \\ 
		(r,s) & \mapsto & r\cdot s%
	\end{array}%
	\text{ \ \ \ \ and\ \ \ \ \ \ \ \ }%
	\begin{array}{ccc}
		\star :R\times S & \longrightarrow & S \\ 
		(r,s) & \mapsto & r\star s%
	\end{array}%
	\]%
	be actions of $R$ on $S$. We denote the group operation additively,
	nevertheless the group is not abelian. Moreover, let $\varphi _{S}$ and $%
	\varphi _{R}$ be conjugate actions on $S$ and $R$ respectively. Using
	commutativity of the diagrams,
	
	\[
	\begin{array}[t]{cc}
		\begin{array}{c}
			\xymatrixrowsep{4em}\xymatrixcolsep{4em}\xymatrix{ S\times S
				\ar[r]^-{\varphi_S} \ar[d]_{\partial \times 1_S}& S \ar[d]^{1_S} \\ R\times
				S \ar[r]^-{\cdot} \ar[d]_{ 1_R \times \partial}& S \ar[d]^{\partial } \\
				R\times R \ar[r]^-{\varphi_R} & R}%
		\end{array}
		& 
		\begin{array}{c}
			\xymatrixrowsep{4em}\xymatrixcolsep{4em}\xymatrix{ S\times S
				\ar[r]^-{\varepsilon_S} \ar[d]_{\partial \times 1_S}& S \ar[d]^{1_S} \\
				R\times S \ar[r]^-{\star} \ar[d]_{ 1_R \times \partial}& S \ar[d]^{\partial
				} \\ R\times R \ar[r]^-{\varepsilon_R} & R}%
		\end{array}%
	\end{array}%
	\]
	
	$\partial: (S,\varepsilon _{S}) \longrightarrow (R,\varepsilon _{R})$ is a crossed module in the category of groups with
	action by following conditions
	
	\[
	\begin{tabular}{llll}
		\textbf{CM1)}$\ $ & $\partial (r\cdot s)$ & $=$ & $\varphi _{R}(r,\partial
		(s))$ \\ 
		&  & $=$ & $-r+\partial (s)+r$ \\ 
		\textbf{CM2)}$\ $ & $\partial (s)\cdot s_{1}$ & $=$ & $1_{S}(\varphi
		_{S}(s,s_{1}))$ \\ 
		&  & $=$ & $\varphi _{S}(s,s_{1})$ \\ 
		&  & $=$ & $-s+s_{1}+s$ \\ 
		\textbf{CM3)}$\ \ $ & $\partial (r\star s)$ & $=$ & $\varepsilon
		_{R}(r,\partial (s))$ \\ 
		&  & $=$ & ${\partial (s)}^{r}$ \\ 
		\textbf{CM4)}$\ $\  & $\partial (s)\star s_{1}$ & $=$ & $1_{S}(\varepsilon
		_{S}(s,s_{1}))$ \\ 
		&  & $=$ & $\varepsilon _{S}(s,s_{1})$ \\ 
		&  & $=$ & ${s_{1}}^{s}$%
	\end{tabular}%
	\]
\end{definition}

A crossed module of groups with action is written by $\calX^{\bullet
}=(\partial :S^{\bullet }\rightarrow R^{\bullet })$ notation. When only 
\textbf{CM1 }and \textbf{CM3 }conditions are satisfied, the resulting
structure is a \emph{pre-crossed module}.

\noindent \textbf{Examples}

\begin{itemize}
	\item Let $R^{\bullet }$ be a group with action and $S^{\bullet }$ be an
	ideal of $R^{\bullet }$. $\calX^{\bullet }=(inc:S^{\bullet }\hookrightarrow
	R^{\bullet })$ is a crossed module of groups with action. Where $R$ acts on $%
	S$ by $r\cdot s=-r+s+r$ and $r\star s=\varepsilon _{R}(r,s)=s^{r}$.
	
	\item Let $G^{\bullet }$ be a group with action and $M$ be any $G$-module.
	Using $\varepsilon _{M}(m,m_{1})=m_{1}^{m}=m_{1}$ trivial action $M^{\bullet
	}$ is a group with action. Then $\calX^{\bullet }=(0:M^{\bullet }\rightarrow
	G^{\bullet })$ is a crossed module of groups with action. Where the boundary
	of $\calX^{\bullet }$ is $(m,g)\mapsto e_{G}$ zero morphism and $G$ acts on $%
	M$ by $(g,m)\rightarrow g\cdot m$ and $(g,m)\rightarrow g\star m$ any two
	actions.
	
	\item The direct product of $\calX_{1}^{\bullet }=(\partial
	_{1}:S_{1}^{\bullet }\rightarrow R_{1}^{\bullet })$ and $\calX_{2}^{\bullet
	}=(\partial _{2}:S_{2}^{\bullet }\rightarrow R_{2}^{\bullet })$ is $\calX%
	_{1}^{\bullet }\times \calX_{2}^{\bullet }=(\partial _{1}\times \partial
	_{2}:S_{1}^{\bullet }\times S_{2}^{\bullet }\rightarrow R_{1}^{\bullet
	}\times R_{2}^{\bullet })$ a crossed module of groups with action with $%
	R_{1},R_{2}$ acting trivially on $S_{1},\ S_{2}$ respectively.
\end{itemize}

\begin{definition}
	Let $\calX_{1}^{\bullet }=(\partial _{1}:S_{1}^{\bullet }\rightarrow
	R_{1}^{\bullet })$ and $\calX_{2}^{\bullet }=(\partial _{2}:S_{2}^{\bullet
	}\rightarrow R_{2}^{\bullet })$ be crossed modules of groups with action. A
	crossed module of groups with action morphism%
	\[
	(\alpha ,\beta ):(\partial _{1}:S_{1}^{\bullet }\rightarrow R_{1}^{\bullet
	})\longrightarrow (\partial _{2}:S_{2}^{\bullet }\rightarrow R_{2}^{\bullet
	}) 
	\]
\end{definition}

\noindent is a pair of homomorphisms $\alpha :S_{1}^{\bullet }\rightarrow
S_{2}^{\bullet }$, $\beta :R_{1}^{\bullet }\rightarrow R_{2}^{\bullet }$
such that%
\[
\begin{tabular}{llll}
	i)$\ $ & $\beta \partial _{1}(s_{1})=\partial _{2}\alpha (s_{1})$ &  & for
	all $s_{1}\in S_{1}$ \\ 
	ii)$\ $ & $\alpha (r_{1}\cdot _{1}s_{1})=\beta (r_{1})\cdot _{2}\alpha
	(s_{1})$ &  & for all $s_{1}\in S_{1},r_{1}\in R_{1}$ \\ 
	iii)$\ \ $ & $\alpha (r_{1}\star _{1}s_{1})=\beta (r_{1})\star _{2}\alpha
	(s_{1})$ &  & 
\end{tabular}%
\]%
So we get the category of crossed module of groups with action. It is
denoted by $\catXModwA$. If $(\alpha ,\beta ):(\partial _{1}:S_{1}^{\bullet
}\rightarrow R_{1}^{\bullet })\longrightarrow (\partial _{2}:S_{2}^{\bullet
}\rightarrow R_{2}^{\bullet })$ is a crossed module of groups with action
morphism such that $\alpha $ and $\beta $ both isomorphisms then $(\alpha
,\beta )$ is called an isomorphism. The kernel of $(\alpha ,\beta )$, $\ker
(\alpha ,\beta )$ is the crossed module of groups with action $(\partial
:\ker \alpha \rightarrow \ker \beta )$.

\section{Simplicial Group with Action}

\begin{definition}
	Let $[n]$ be an ordered set. For $[n]$ and $[m]$ ordered sets, $f:[n]
	\longrightarrow [m]$ monotone function is called the operator. $\Delta[n]$
	category have $[n]$ objects and $f$ operator morphism. $\Delta^{op}[n]$ is
	the dual of the $\Delta[n]$ category.
	
	Let $\catGrwA$ be the category of group with action. The functor
	
	\[
	G^{\bullet }:\Delta ^{op}[n]\longrightarrow \catGrwA 
	\]%
	is the simplicial group with action.
	
	The operator of the simplicial category can be represented with two special
	operators, $\delta _{i}^{n}$ and $\sigma _{j}^{n}$: 
	\[
	G^{\bullet }([n])=G_{n}^{\bullet },G^{\bullet }(\delta _{i}^{n})=d_{i}^{n}%
	\text{ and }G^{\bullet }(\sigma _{j}^{n})=s_{j}^{n} 
	\]%
	The category of simplicial group with action can be defined and is denoted
	by $\catSimpGrpwA$.
\end{definition}

\begin{definition}
	Let $G^{\bullet }$ be a simplicial group with action. Let
	
	\[
	NG_{n}^{\bullet }=\bigcap_{i=0}^{n-1}\ker d_{i}^{n} 
	\]%
	and the restriction of function $d_{n}^{n}$ 
	\[
	\partial _{n}:NG_{n}^{\bullet }\longrightarrow NG_{n-1}^{\bullet } 
	\]%
	is defined. So the chain complex 
	$$
	\xymatrix{
		(NG^{\bullet}):...NG_2^{\bullet} \ar[r]^-{\partial_2 }& NG_1^{\bullet} \ar[r]^-{\partial_1}& NG_0^{\bullet}=G_0^{\bullet}
	}
	$$
	is called the Moore chain complex of simplicial group with action. In the
	chain complex 
	\[
	NG_{0}^{\bullet }=G_{0}^{\bullet },NG_{1}^{\bullet }=\ker d_{0}^{\prime
	},NG_{2}^{\bullet }=\ker d_{0}^{2}\cap \ker d_{1}^{2}, 
	\]%
	are denoted.
\end{definition}

The following result of different versions (such as for Lie, group, algebra)
can be found in \cite{arvasi-porter1,TIM}

\begin{theorem}
	The category of crossed module of groups with action $\catXModwA$ is
	equivalent to the category of simplicial group with action $\catSimpGrpwA$
	with Moore Complex of length 1.
\end{theorem}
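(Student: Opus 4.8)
The plan is to run the proof of Theorem~\ref{teo1} ``one level with action'': repeat the familiar nerve/Moore-complex dictionary between crossed modules and simplicial groups, but inside the category $\catGrwA$, and check at every stage that the self-actions are transported correctly. Condition~1 of Section~\ref{sec-gwa} plays no role here.

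First I would construct the functor from $\catXModwA$ to the category of simplicial groups with action whose Moore complex has length $1$. Given $\calX^{\bullet}=(\partial:S^{\bullet}\rightarrow R^{\bullet})$, form the associated internal category in $\catGrwA$ and take its nerve $G^{\bullet}$, so that $G_0^{\bullet}=R^{\bullet}$, $G_1^{\bullet}=S\rtimes R$, and in general $G_n$ carries $n$ copies of $S$ and one copy of $R$ with the usual semidirect-product structure built from $\partial$ and the action $\cdot$, the face and degeneracy operators being the standard ones. On underlying groups this is exactly the construction behind Theorem~\ref{teo1}, so the simplicial identities and the vanishing $NG_n^{\bullet}=0$ for $n\geq 2$ require nothing new. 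The genuinely new point is the group-with-action structure on each $G_n$: on $G_1=S\rtimes R$ the self-action $\varepsilon_{G_1}$ is determined, by biadditivity in each variable, by $\varepsilon_S$, $\varepsilon_R$ and the two mixed actions $\cdot$ and $\star$, and conditions i)--viii) of Section~\ref{sec-gwaXMod} together with \textbf{CM1}--\textbf{CM4} are precisely what make $\varepsilon_{G_1}$ satisfy the four axioms of a group with action and make every $d_i$ and $s_j$ a morphism in $\catGrwA$. (That each $NG_n^{\bullet}$ inherits a group-with-action structure is automatic, being an intersection of kernels of $\catGrwA$-morphisms, cf.\ Definition~\ref{ideal}.)

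Next I would build the candidate inverse functor. Starting from a simplicial group with action $G^{\bullet}$ whose Moore complex has length $1$, put $S^{\bullet}=NG_1^{\bullet}=\ker d_0^1$, $R^{\bullet}=G_0^{\bullet}$ and $\partial=d_1^1|_{NG_1^{\bullet}}$. The first derived action is conjugation through the degeneracy, $r\cdot s=-s_0(r)+s+s_0(r)$; the second action $\star$ is obtained by restricting the ambient self-action $\varepsilon_{G_1}$ along $s_0\colon R^{\bullet}\to G_1^{\bullet}$ and the inclusion $S^{\bullet}\hookrightarrow G_1^{\bullet}$, its precise form being dictated by \textbf{CM3} just as $\cdot$ is dictated by \textbf{CM1}. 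Then \textbf{CM1} is the $R$-equivariance of $\partial$ and follows from $d_1^1 s_0=\id$; \textbf{CM2} is the Peiffer identity, proved as in Theorem~\ref{teo1}; \textbf{CM3} and \textbf{CM4} are the $\star$-analogues and hold because $d_0^1$ and $d_1^1$ commute with the self-actions, being $\catGrwA$-morphisms; and the mixed compatibilities iv)--viii) between $\cdot$ and $\star$ are forced by the group-with-action axioms on $G_0^{\bullet}$ and $G_1^{\bullet}$ combined with the simplicial identities. On arrows one checks that conditions ii) and iii) in the definition of a $\catXModwA$-morphism correspond to preservation of $\cdot$ and $\star$.

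Finally I would show the two functors are mutually quasi-inverse: the underlying natural isomorphisms are exactly those already produced in the proof of Theorem~\ref{teo1}, and the only extra verification is that each component is compatible with the self-actions $\varepsilon$, which reads off directly from the formulas for $\varepsilon_{G_1}$, $\cdot$ and $\star$ above. The main obstacle is the bookkeeping underlying the construction of $\varepsilon_{G_1}$ on the semidirect product $S\rtimes R$: pinning down its correct formula and proving that conditions iv)--viii) are not merely sufficient but \emph{exactly} equivalent, modulo $\partial$, to the statement that $\varepsilon_{G_1}$ is a group-with-action structure for which all nerve operators are $\catGrwA$-morphisms. Everything else is the standard crossed-module/cat$^1$-group correspondence with additional, but routine, computation.
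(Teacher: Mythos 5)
Your proposal follows essentially the same route as the paper: extract $\partial\colon NG_1^{\bullet}\to NG_0^{\bullet}$ with $r\cdot s=-s_0(r)+s+s_0(r)$ and $r\star s=s^{s_0(r)}$, prove the Peiffer-type conditions via the degeneracy/Moore-complex argument, and conversely build a $1$-truncated simplicial group with action on the semidirect product $R^{\bullet}\rtimes S^{\bullet}$. The ``bookkeeping'' you defer is exactly what the paper supplies explicitly, namely the self-action $(r,s)^{(r_1,s_1)}=(r\ast r_1,\; r\ast s_1+s\ast s_1+s\ast r_1)$ together with the check that $\ker d_0$ and $\ker d_1$ commute and act trivially on each other, which yields the length-one condition.
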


\begin{proof} Let $G^{\bullet }$ be a simplicial group with
	action with Moore complex of length 1. For the group with action crossed
	modules, 
	\[
	NG_{2}^{\bullet }\cap D_{2}=0\text{ and }\partial _{2}(NG_{2}^{\bullet }\cap
	D_{2})=\partial _{2}(G_{2}^{\bullet }\cap D_{2})=\{0\} 
	\]%
	is satisfied. $D_{2}$ is a subgroup generated by $s_{j}$ degenerated
	operator in $G_{2}^{\bullet }$. 
	$$
	\xymatrix{
		(NG^{\bullet}):...NG_2^{\bullet} \ar[r]^-{\partial_2 }& NG_1^{\bullet} \ar[r]^-{\partial_1}& NG_0^{\bullet}
	}
	$$
	chain complex has a sub-complex as 
	\[
	NG_{1}^{\bullet }\longrightarrow NG_{0}^{\bullet } 
	\]%
	defined sub-complex is a group with action homomorphism. An action $%
	NG_{0}^{\bullet }$ on $NG_{1}^{\bullet }$ 
	\[
	\begin{array}{ccc}
		NG_{0}^{\bullet }\times NG_{1}^{\bullet } & \longrightarrow & 
		NG_{1}^{\bullet } \\ 
		(r,s) & \longrightarrow & s^{r}=-(s_{0}r)+s+(s_{0}r) \\ 
		&  &  \\ 
		NG_{0}^{\bullet }\times NG_{1}^{\bullet } & \longrightarrow & 
		NG_{1}^{\bullet } \\ 
		(r,s) & \longmapsto & r \ast s=s^{s_{0}(r)}%
	\end{array}%
	\]%
	\noindent is constructed. For the first condition of group with action of
	crossed modules, 
	\begin{eqnarray*}
		\partial _{1}(s^{r}) &=&\partial _{1}(-(s_{0}r)+s+(s_{0}r))=-r+d_{1}s+r
		\\
		&=&-r+\partial _{1}s+r
	\end{eqnarray*}%
	\noindent is satisfied properly. On the other hand, let $a,b\in
	NG_{1}^{\bullet }$. 
	\[
	b^{\partial _{1}a}=-s_{0}d_{1}a+b+s_{0}d_{1}a 
	\]%
	is defined by the action of group with action. $NG_{2}^{\bullet }\cap
	D_{2}^{\bullet }=N_{2}^{\bullet }\cap D_{2}^{\bullet }$ equality is
	satisfied with $a,b\in NG_{1}^{\bullet }$ and 
	\[
	F_{(0)(1)}(a,b)=[s_{0}a,s_{1}b][s_{1}b,s_{1}a]\in NG_{2}^{\bullet }\cap
	D_{2}^{\bullet } 
	\]%
	in $NG_{2}^{\bullet }\cap D_{2}^{\bullet }$. Since the length of Moore
	complex is 1, we have 
	\[
	\partial _{2}(N_{2}^{\bullet }\cap D_{2}^{\bullet })=\{0\} 
	\]%
	and the following equality 
	\begin{eqnarray*}
		\partial _{2}(F_{(0)(1)}(a,b)) &=&d_{2}([s_{0}a,s_{1}b][s_{1}b,s_{1}a]) \\
		&=&-s_{0}d_{1}a-b+s_{0}d_{1}a+b(-b-a+b+a)\in NG_{2}^{\bullet
		}\cap D_{2}^{\bullet }
	\end{eqnarray*}%
	so 
	\[
	-s_{0}d_{1}a+b+s_{0}d_{1}a=-a+b+a 
	\]%
	\noindent is the second condition of group with action crossed modules. For $%
	r \in NG_{0}^{\bullet},s \in NG_{1}^{\bullet}$ 
	\begin{eqnarray*}
		\partial _{1}(s^{s_{0}(\partial_{1}r)}) &=&\partial _{1}(s)^{\partial
			_{1}(r)} \\
		&=&\partial _{1}(s^r)
	\end{eqnarray*}
	
	\noindent Since $\partial _{1}$ is one-to-one morphism, crossed module of
	group with action CM4 condition holds.
	
	CM3)%
	\begin{eqnarray*}
		\partial _{1}(r \ast s) &=&\partial _{1}(s^{s_{0}(r)}) \\
		&=&\partial _{1}(s)^{\partial _{1}(s_{0}r)} \\
		&=& \partial _{1}(s)^r \\
		&=& \partial _{1}(s)\ast r
	\end{eqnarray*}
	
	\noindent condition is obtained.
	
	\noindent Thus, there exists an obvious functor 
	\[
	\begin{array}{llll}
		\triangle : & \catXModwA & \rightarrow & \catSimpGrpwA_{\leq 1} \\ 
		& \qquad R^{\bullet } & \mapsto & \triangle (R^{\bullet })=(S^{\bullet
		}\rightarrow R^{\bullet })%
	\end{array}%
	\]%
	can simply be noticed. Conversely, let $S^{\bullet }$ and $R^{\bullet }$ be
	a group with actions. We will show that 
	\[
	\partial :S^{\bullet }\longrightarrow R^{\bullet } 
	\]%
	is a crossed module of group with action. Since $R^{\bullet }$ acts on $%
	S^{\bullet }$ we far from the semi-direct product $R^{\bullet }\rtimes
	S^{\bullet }$. We define the semi-direct group with action $R^{\bullet
	}\rtimes S^{\bullet }$ by 
	\[
	(r,s)(r_{1},s_{1})=(r+r_{1 },s^ {r_{1}}+s_{1 }) 
	\]%
	and 
	\[
	(r,s)^{(r_{1},s_{1})}=(r \ast{r_{1}},r \ast s_{1}+s \ast s_{1}+s \ast r_{1}) 
	\]
	an action on $S^{\bullet}$ by $R^{\bullet}$. Defining $S_{0}^{\bullet
	}=R^{\bullet }$ and $S_{1}^{\bullet }=R^{\bullet }\rtimes S^{\bullet }$ we
	get 
	\begin{eqnarray*}
		d_{1}(r,s) &=&(\partial _{1}s)+r \\
		d_{0}(r,s) &=&r \\
		s_{0}(r) &=&(0,r)
	\end{eqnarray*}%
	\noindent These operators satisfy the following simplicial identities due to being group with action morphisms.  
	\begin{eqnarray*}
		d_{1}s_{0}(r) &=&d_{1}(r,0)=(\partial _{1}0)+r=r \\
		d_{0}s_{0}(r) &=&d_{0}(r,0)=r
	\end{eqnarray*}%
	Thus, a 1-truncated simplicial group with action is $\{S_{1}^{\bullet
	},S_{0}^{\bullet }\}=S^{\prime \bullet }$. From truncation we have the
	following equalities 
	\begin{eqnarray*}
		NS_{0}^{\prime \bullet } &=&NS_{0}^{\bullet }=S_{0}^{\bullet }=N \\
		NS_{1}^{\prime \bullet } &=&NS_{1}^{\bullet }=(\ker d_{0})NS_{2}^{\prime
			\bullet }=\{0\}
	\end{eqnarray*}%
	Equivalently, if 
	\[
	\partial _{2}(NS_{2}^{\bullet }\cap D_{2}^{\bullet })=\partial
	_{2}(NS_{2}^{\bullet })=\{0\} 
	\]%
	is satisfied, $S^{\prime \bullet }$ is denoted a simplicial group with
	action with Moore complex of length 1. By the definition of $F_{\alpha
		,\beta }$ functions 
	\[
	\partial _{2}(NS_{2}^{\bullet })=[\ker d_{0},\ker d_{1}] 
	\]%
	is obtained. Also 
	\[
	\begin{tabular}{lll}
		$d_{0}:R^{\bullet }\rtimes S^{\bullet }$ & $\longrightarrow $ & $R^{\bullet
		} $ \\ 
		\multicolumn{1}{r}{$(r,s)$} & $\mapsto $ & $r$%
	\end{tabular}%
	\]%
	with $\ker d_{0}=\{(0,s):s\in S^{\bullet }\}\cong S^{\bullet }$ and 
	\[
	\begin{tabular}{lll}
		$d_{1}:R^{\bullet }\rtimes S^{\bullet }$ & $\longrightarrow $ & $R^{\bullet
		} $ \\ 
		\multicolumn{1}{r}{$(r,s)$} & $\mapsto $ & $(\partial _{1}s)+r$%
	\end{tabular}%
	\]%
	with $\ker d_{1}=\{(-\partial _{1}s,s):s\in S^{\bullet }\}$ are
	conveniently determined. Since $(0,s_{1})\in kerd_{0}$ and \newline
	$(-\partial _{1}s,s)\in \ker d_{1}$, we have 
	\[
	\lbrack (0,s_{1}),(-\partial _{1}s,s)]=(0,0). 
	\]%
	Using the definition of semi-direct product the following equation holds 
	\begin{eqnarray*}
		\lbrack (0,s_{1}),(-\partial _{1}s,s)] &=&(0,s_{1})(-\partial
		_{1}s,s)(-(0,s_{1}))(-(-\partial _{1}s,s)) \\
		&=&(-\partial_{1}s,(s_{1})^{-\partial_{1}(s)}+s)(0,-s_{1})(%
		\partial_{1}s,(-s)^{\partial _{1}s}) \\
		&=&(-\partial _{1}s,(s_{1})^{-\partial _{1}s}+s)(\partial
		_{1}s,(-s_{1})^{\partial _{1}s}+(-s)^{\partial _{1}s}) \\
		&=&(0,((s_{1})^{-\partial_{1}s}+s)^{\partial_{1}s}+(-s_{1})^{%
			\partial _{1}s}+(-s)^{\partial _{1}s}) \\
		&=&(0_{R},0_{S})
	\end{eqnarray*}%
	where $(0_{R^{\bullet }},0_{S^{\bullet }})$ is the identity element.
	
	Furthermore, 
	\begin{eqnarray*}
		(0,s_{1})^{(-\partial_{1}s,s)}&=&(0 \ast{(-\partial_{1}s)},0 \ast s+
		s_{1} \ast s+ s_{1}*{(-\partial_{1}s)}) \\
		&=& (0,\partial_{1}s_{1} \ast s+\partial_{1}s_{1} \ast {(-s)}) \\
		&=& (0, s^{s_{1}}+(-s)^{s_{1}}) \\
		&=& (0,s_{1})
	\end{eqnarray*}
	and 
	\begin{eqnarray*}
		(-\partial_{1}s,s)^{(0,s_{1})}&=& (-\partial_{1}s,-\partial_{1}s
		\ast s_{1} + s \ast s_{1} +s \ast 0) \\
		&=& (-\partial_{1}s,(-s) \ast s_{1} + s \ast s_{1} +s) \\
		&=& (-\partial_{1}s,0 \ast s_{1} + s) \\
		&=& (-\partial_{1}s, s)
	\end{eqnarray*}
	equations are hold. So, 
	\[
	\lbrack \ker d_{0},\ker d_{1}]=(0_{R^{\bullet }},0_{S^{\bullet }}) 
	\]%
	and 
	\[
	\partial _{2}(NS_{2}^{\bullet })=\{0\} 
	\]%
	are obtained from $S^{\prime }=\{S_{1}^{\bullet },S_{0}^{\bullet }\}$ with
	Moore complex of length 1.
\end{proof}

\section{Computer Implementation}\label{sec-computer}

\GAP\ (Groups, Algorithms, Programming \cite{gap}) is the leading symbolic
computation system for solving computational discrete algebra problems.
Symbolic computation has underpinned several key advances in Mathematics and
Computer Science, for example, in number theory and coding theory (see \cite%
{hpc-gap} ). \GAP\, which is free, opensource, and extensible system, can
deal with different discrete mathematical problems, but it focuses on
computational group theory. It is distributed under the GNU Public License.
The system is delivered together with the source codes, which are written in
two languages: the kernel of the system is written in C, and the library of
functions and additional packages is in a special language, also called \GAP%
. The \GAP\ system and extension packages now comprise 360K lines of C and
900K lines of \GAP\ code. The Small Groups library has been used in such
landmark computations as the "Millennium Project" to classify all finite
groups of order up to 2000 by Besche, Eick and O'Brien in \cite{besche}.

The \textsf{GwA} package for GAP contains functions for groups with action and
their morphisms and was first described in \cite{gwa}. In this paper, we
have developed new functions which construct (pre) crossed modules of groups
with action and renamed the package to \textsf{XModGwA}.

The function \textbf{GwA} in the package may be used in two ways. \textbf{%
	GwA($G$)} returns the group with trivial action, while \textbf{GwA($G$,$act$)%
} returns a group with action for this chosen group and an $act
:G\rightarrow Aut(G)$ action. Functions for groups with action include 
\textbf{IsGwA}, \textbf{IsPerfectGwA}, \textbf{IsIdeal}, \textbf{%
	AllIdealOnGwA}, \textbf{IsGwAC1}, \textbf{Commutator}, \textbf{%
	LowerCentralSeriesOfGwA}, \textbf{IsNilpotent}, and \textbf{%
	NilpotencyClassOfGwA}. Attributes of a group with action constructed in this
way include \textbf{BaseGroup} and \textbf{BaseAction}.

The function \textbf{GwAMorphismObj} defines morphisms of groups with
action, and the function \textbf{IsGwAMorphism} which controls whether a map
satisfies condition morphisms of groups with action or not. The function 
\textbf{AllGwAMorphisms} is used to find all morphisms between two groups
with action.

The following GAP session illustrates the use of these functions.

\begin{Verbatim}[frame=single, fontsize=\small, commandchars=\\\{\}]
	\textcolor{blue}{gap> kl4 := Group((1,2),(3,4));;}
	\textcolor{blue}{gap> allgwa_on_kl4 := AllGwAOnGroup(kl4);;}
	\textcolor{blue}{gap> Length(allgwa_on_kl4);}
	10
	\textcolor{blue}{gap> kl4_A := allgwa_on_kl4[4];}
	GroupWithAction [ Group( [ (1,2), (3,4) ] ), * ]
	\textcolor{blue}{gap> A4 := AlternatingGroup(4);;}
	\textcolor{blue}{gap> A4_A := GwA(A4);}
	GroupWithAction [ AlternatingGroup( [ 1 .. 4 ] ), * ]
	\textcolor{blue}{gap> IsGwA(kl4_A); IsGwA(A4_A);}
	true
	true
	\textcolor{blue}{gap> IsPerfectGwA(kl4_A);}
	false
	\textcolor{blue}{gap> List(allgwa_on_kl4, i -> NilpotencyClassOfGwA(i));}
	[ 1, 2, 0, 0, 0, 2, 0, 0, 0, 2 ]
	\textcolor{blue}{gap> LowerCentralSeriesOfGwA(A4_A);}
	[ GroupWithAction [ AlternatingGroup( [ 1 .. 4 ] ), * ],
	GroupWithAction [ Group( [ (), (1,2)(3,4), (1,3)(2,4), (1,4)(2,3) ] ), * ] ]
	\textcolor{blue}{gap> gen_kl4 := GeneratorsOfGroup(kl4);;}
	\textcolor{blue}{gap> gen_A4 := GeneratorsOfGroup(A4);;}
	\textcolor{blue}{gap> el_A4 := Elements(A4);;}
	\textcolor{blue}{gap> f := GroupHomomorphismByImages(kl4,A4,gen_kl4,[el_A4[4],el_A4[4]]);}
	[ (1,2), (3,4) ] -> [ (1,2)(3,4), (1,2)(3,4) ]
	\textcolor{blue}{gap> m := GwAMorphismObj(kl4_A,A4_A,f);}
	GroupWithAction [ Group(
	[ (1,2), (3,4) ] ), * ] => GroupWithAction [ AlternatingGroup( [ 1 .. 4 ] ), * ]
	\textcolor{blue}{gap> IsGwAMorphism(m);}
	true
\end{Verbatim}

The function \textbf{AllGwAOnGroup($G$)} constructs a list of all groups
with action over $G$. The function \textbf{AreIsomorphicGwA} is used for
checking whether or not two groups with action are isomorphic, and \textbf{%
	IsomorphicGwAFamily} returns a list of representatives of the isomorphism
classes.

In the following \GAP\ session, we compute all $736$ groups with action on $%
C_{2}\times C_{2}\times C_{2}$; representatives of the $14$ isomorphism
classes; and the list of members of a family.

\begin{Verbatim}[frame=single, fontsize=\small, commandchars=\\\{\}]
	\textcolor{blue}{gap> C := Range(IsomorphismPermGroup(SmallGroup(8,5)));;}
	\textcolor{blue}{gap> allgwa_onC := AllGwAOnGroup(C);; 	}	
	\textcolor{blue}{gap> Length(allgwa_onC);}
	736
	\textcolor{blue}{gap> AreIsomorphicGwA(allgwa_onC[1],allgwa_onC[2]);}
	false
	\textcolor{blue}{gap> IsomorphicGwAFamily(allgwa_onC[101],allgwa_onC);}
	[ 101, 217, 323, 403, 490, 561, 576 ] 
\end{Verbatim}

Six of the isomorphism families satisfy Condition 1 and there are six
families with nilpotent and eight are not nilpotent. Other features obtained
with these functions are given in the table below.

\begin{longtable}{ccccccc}
	\hline\hline
	Family & Number of Members & Representator & Number of Ideals & Nilpotency Class & Condition 1    \\ \hline
	1 & 1 & 1/736 & 16 & 1 & \cmark    \\
	2 & 84 & 2/736 & 7 & 0 & \xmark    \\
	3 & 21 & 4/736 & 8 & 2 & \cmark    \\
	4 & 42 & 9/736 & 6 & 2 & \cmark    \\
	5 & 84 & 65/736 & 6 & 0 & \xmark    \\
	6 & 168 & 67/736 & 6 & 0 & \xmark    \\
	7 & 14 & 71/736 & 6 & 2 & \cmark    \\
	8 & 21 & 81/736 & 6 & 2 & \cmark    \\
	9 & 7 & 101/736 & 6 & 2 & \cmark    \\
	10 & 56 & 107/736 & 3 & 0 & \xmark    \\
	11 & 84 & 108/736 & 5 & 0 & \xmark    \\
	12 & 42 & 110/736 & 4 & 0 & \xmark    \\
	13 & 84 & 112/736 & 4 & 0 & \xmark    \\
	14 & 28 & 122/736 & 6 & 0 & \xmark   \\ \hline
	&  &  &  &  &   \\
\end{longtable}

Function for the action between two groups with action include \textbf{IsGwAAction}.
The function \textbf{IsGwAAction} is implemented for checking the structure of a group with action. 

The group $R$ acts on $S$ by an action $\alpha ,\beta :R\rightarrow Aut(S)$
and let $S^{\bullet }$ and $R^{\bullet }$ be groups with action on group $S$
and $R$, respectively. \textbf{IsGwAAction(}$S^{\bullet },R^{\bullet
},\alpha ,\beta $\textbf{)} is used to verify that the contidions of action
between two groups with action in page \pageref{cond_action} are satisfied.
The function \textbf{AllXModGwAActions(}$S^{\bullet },R^{\bullet }$\textbf{)}
constructs a list of all actions of group with action $R^{\bullet }$ on
group with action $S^{\bullet }$.

The following GAP session illustrates the use of these functions.

\begin{Verbatim}[frame=single, fontsize=\small, commandchars=\\\{\}]
	\textcolor{blue}{gap> G := Range(IsomorphismPermGroup(SmallGroup(8,2)));;}
	\textcolor{blue}{gap> allgwa_onG := AllGwAOnGroup(G);;}
	\textcolor{blue}{gap> Length(allgwa_onG);}
	32
	\textcolor{blue}{gap> SwA := allgwa_onG[2];}
	GroupWithAction [ Group( [ (1,2), (3,4), (5,6) ] ), * ]
	\textcolor{blue}{gap> RwA := allgwa_onG[4];}
	GroupWithAction [ Group( [ (1,2), (3,4,5,6) ] ), * ]
	\textcolor{blue}{gap> all_acts := AllXModGwAActions(SwA,RwA);;}
	\textcolor{blue}{gap> Length(all_acts);}
	256
	\textcolor{blue}{gap> act_pair := all_acts[13];;}
	\textcolor{blue}{gap> IsGwAAction(SwA,RwA,act_pair[1],act_pair[2]);}
	true
\end{Verbatim}

Functions for crossed modules of groups with action include \textbf{%
	PreXModGwAObj}, \textbf{IsPreXModGwA}, \textbf{IsXModGwA} and \textbf{%
	XModGwAByIdeal}. Attributes of a group with action constructed in this way
include \textbf{XModGwAAction}, \textbf{Range}, \textbf{Source} and \textbf{%
	Boundary}.

A structure which has \textbf{IsXModGwAC1} is a pre-crossed module or a
crossed module of groups with action whose source and range are both
satisfies condition 1.

The following GAP session illustrates the use of these functions.

\begin{Verbatim}[frame=single, fontsize=\small, commandchars=\\\{\}]
	\textcolor{blue}{gap> all_bdys := AllGwAMorphisms(SwA,RwA);;}
	\textcolor{blue}{gap> bdy := all_bdys[7];}
	GroupWithAction [ Group( [ (1,2), (3,4,5,6) ] ), * ] => 
	GroupWithAction [ Group( [ (1,2), (3,4,5,6) ] ), * ]
	\textcolor{blue}{gap> XM1 := PreXModGwAObj(bdy,act_pair[1], act_pair[2]);}
	GroupWithAction [ Group( [ (1,2), (3,4,5,6) ] ), * ] => 
	GroupWithAction [ Group( [ (1,2), (3,4,5,6) ] ), * ]
	\textcolor{blue}{gap> IsPreXModGwA(XM1);}
	true
	\textcolor{blue}{gap> IsXModGwA(XM1);}
	Condition 4 is fail
	For s = (1,2) and s1 = (1,2) => (1,2) <> (1,2)(3,5)(4,6)
	false
	\textcolor{blue}{gap> all_ideals := AllIdealOnGwA(RwA);;}
	\textcolor{blue}{gap> Length(all_ideals);}
	6
	\textcolor{blue}{gap> SwA := all_ideals[3];}
	GroupWithAction [ Group( [ (1,2)(3,4,5,6), (3,5)(4,6) ] ), * ]
	\textcolor{blue}{gap> XM2 := XModGwAByIdeal(RwA,SwA);}
	GroupWithAction [ Group( [ (1,2)(3,4,5,6), (3,5)(4,6) ] ), * ] => 
	GroupWithAction [ Group([ (1,2), (3,4,5,6) ] ), * ]
	\textcolor{blue}{gap> IsXModGwA(XM2);}
	true
	\textcolor{blue}{gap> IsXModGwAC1(XM2);}
	true
\end{Verbatim}

The global function \textbf{AllXModsGwA} may be called in two ways: as 
\textbf{AllXModsGwA(}$S^{\bullet },R^{\bullet }$\textbf{)} to compute all
crossed modules with chosen source and range groups with action; as \textbf{%
	AllXModsGwA(}$x,y,m,n$\textbf{)} to compute all crossed modules of groups
with actions with given size and numbers of small groups. The function
computes both all pre-crossed modules and crossed modules of groups with
action.

In the following GAP session, we get crossed modules using the function.

\begin{Verbatim}[frame=single, fontsize=\small, commandchars=\\\{\}]
	\textcolor{blue}{gap> all_XM3 := AllXModsGwA(SwA,RwA);;}
	\textcolor{blue}{gap> Length(all_XM3[1]); Length(all_XM3[2]);}
	66
	10
	\textcolor{blue}{all_XM4 := AllXModsGwA(4,1,4,2);;}
	\textcolor{blue}{Length(all_XM4[1]); Length(all_XM4[2]);}
	416
	184
	\textcolor{blue}{gap> list := Filtered(all_XM4[2], XM -> IsXModGwAC1(XM));;}
	\textcolor{blue}{gap> Length(list);}
	88
\end{Verbatim}


\begin{thebibliography}{99}
\bibitem {xmod} M., Alp, A., Odabas, E.O, Uslu  and C.D., Wensley, Crossed modules and cat$^{1}$-groups, { \it (manual for the XMod package for \textsf{GAP}, version 2.77)} (2019).	
\bibitem {arvasi-porter1}  
Z., Arvasi and T., Porter,  Higher Dimensional Peiffer Elements in Simplicial Commutative Algebras, {\it Theory and Applications of Categories}, {\bf vol. 3, no. 1} (1997) 1--23.

\bibitem {arvasi3} 
Z., Arvasi, T.S., Kuzp\i nar\i\ and E. \"{O}., Uslu, Three Crossed Modules, {\it Homology, Homotopy and Applications}{\bf 11, no. 2} (2009) 161-187.

\bibitem {arvasi-odabas} Z., Arvasi and A., Odabas, Computing 2-dimensional algebras: Crossed modules and Cat$^1$-algebras, {\it J. Algebra Appl.} {\bf 15} (2016) 165-185.
\bibitem {xmodalg} Z., Arvasi and A., Odabas, Crossed Modules and cat$^1 $-algebras. {\it (manual for the textsf-XModAlg share package for GAP, version 1.17)} (2018).
\bibitem {hpc-gap} R., Behrends, K., Hammond, V., Janjic,  A., Konovalov, S., Linton, H-W., Loidl, P., Maier and P., Trinder, HPC-GAP: Engineering a 21st-century high-performance computer algebra system, {\it Concurrency and Computation Practice and Experience} {\bf 28} (2016) 3606-3636.
\bibitem {besche} B., Eick, H. U., Besche and E. A., O'brien, A Millennium Project : Constructing Small Groups, {\it International Journal of Algebra and Computation} {\bf Vol. 12, No. 05} (2002) 623-644.

\bibitem {curtis} E.B., Curtis, Simplicial Homotopy Theory, {\it Adv. in
	Math.} {\bf 6} (1971) 107-209.

\bibitem {DT} T., Datuashvili, Central Series For Groups With Action and Leibniz Algebras, {\it Georgian Mathematical Journal} {\bf Volume 9, Number 4} {2002}
671-682.

\bibitem {DT2} T., Datuashvili, Witt's theorem for groups with action and free Leibniz algebras, {\it Georgian Mathematical Journal} {\bf Volume 11, Number 4} (2004) 691-712.

\bibitem {gap} The GAP Group, GAP -- Groups, Algorithms, and Programming, version 4.10.2 (https://www.gap-system.org) (2019).

\bibitem {JL}  G., Janelidze, Internal crossed modules, {\it Georgian Math. J.} {\bf 10 (1)} (2003) 99-114.

\bibitem {LO2} J.-L., Loday, Spaces having finitely many non-trivial homotopy groups, {\it Jour. Pure Appl. Algebra} {\bf 24} (1982) 179-202.

\bibitem {LD} J.-L., Loday, Operations sur l'homologie cyclique des algebres commutatives, {\it Invent. Math.} {\bf 96}(1989) 205-230. 
\bibitem {LO} J.-L., Loday, Une version non commutative des algebres de Lie:les algebres de Leibniz, {\it Enseign. Math.} {\bf (2) 39} (1993).
\bibitem {LOD} J.-L., Loday, Algebraic K-theory and related operads, {\it Lecture Notes in Math.} Springer, Berlin (2001).
\bibitem {may} J.P., May, Simplicial Objects in Algebraic Topology, {\it Van Nostrand, Math. Studies} {\bf 11} (1965).
\bibitem {xmodgwa} A., Odabas and E., Soylu, Crossed Modules of Groups with Action, {(The XModGwA share package for GAP, version 1.12)} (http://fef.ogu.edu.tr/aodabas/xmodgwa/) (2020).
\bibitem {TIM} T., Porter, Homotopy Quantum Field Theories meets the Crossed Menagerie: an introduction to HQFTs and their relationship with things
simplicial and with lots of crossed gadgetry, {\it Lecture Notes} (2011).
\bibitem {gwa} E.O., Uslu, A.F., Aslan and A., Odabas, On groups with action on itself, {\it Georgian Mathematical Journal} {\bf 26(3)} (2019) 459-470.
\bibitem {WHT1} J.H.C., Whitehead, On adding relations to homotopy groups, {\it Annals of Mathematics} {\bf 42 (2)} (1941) 409- 428.
\bibitem {WHT2} J.H.C., Whitehead, Note on a previous paper entitled "On adding relations to homotopy groups", {\it Annals of Mathematics} {\bf 47 (2)} (1946) 806- 810.
\bibitem {WHT} J.H.C., Whitehead, Combinatorial homotopy II, {\it Bulletin of the American Mathematical Society} {\bf 55 (3)} (1949) 213- 245.
\bibitem {WITT} E., Witt, Treue Darstellung Liescher Ringe, {\it J. Reine Angew. Math.} {\bf 177} (1937) 152-160.
\end{thebibliography}
\end{document}